\newtheorem{corollary}{Corollary}[section]
\newtheorem{definition}[corollary]{Definition}
\newtheorem{lemma}[corollary]{Lemma}
\newtheorem{proposition}[corollary]{Proposition}
\newtheorem{remark}[corollary]{Remark}
\newtheorem{theorem}[corollary]{Theorem}
\newfont{\sBlackboard}{msbm10 scaled 900}
\newcommand{\mylabel}[1]{\label{#1}
            \ifx\undefined\stillediting
            \else \fbox{$#1$}\fi }
\newcommand{\BE}{\begin{equation}}
\newcommand{\EEQ}{\end{equation}}
\newcommand{\rfb}[1]{\mbox{\rm
   (\ref{#1})}\ifx\undefined\stillediting\else:\fbox{$#1$}\fi}
\newfont{\Blackboard}{msbm10 scaled 1200}
\newfont{\roma}{cmr10 scaled 1200}
\def\CC{\rm \hbox{C\kern-.56em\raise.4ex
         \hbox{$\scriptscriptstyle |$}\kern+0.5 em }}
\newcommand{\be}{\begin{equation}}
\newcommand{\ee}{\end{equation}}
\newcommand{\beq}{\begin{eqnarray}}
\newcommand{\eeq}{\end{eqnarray}}
\newcommand{\beqs}{\begin{eqnarray*}}
\newcommand{\eeqs}{\end{eqnarray*}}
\newcommand{\bt}{\begin{Theorem}}
\newcommand{\et}{\end{Theorem}}
\newcommand{\br}{\begin{remark}}
\newcommand{\er}{\end{remark}}
\newcommand{\bc}{\begin{Corollary}}
\newcommand{\ec}{\end{Corollary}}
\newcommand{\el}{\end{Lemma}}
\newcommand{\bd}{\begin{definition}}
\newcommand{\ed}{\end{definition}}
\newcommand{\C}{{\mathbb C}}
\newcommand{\nline}  {{\mathbb N}}
\newcommand{\rline}  {{\mathbb R}}
\def\cA{{\cal A}}
\def\A{{\cal A}}
\def\cH{{\cal H}}
\def\cL{{\cal L}}
\def\h{{\mathcal H}}
\newcommand{\mm}    {{\hbox{\hskip 0.5pt}}}
\newcommand{\bluff} {{\hbox{\raise 15pt \hbox{\mm}}}}
\def\section{\@startsection {section}{1}{\z@}{-3.5ex plus -1ex minus
    -.2ex}{2.3ex plus .2ex}{\large\bf}}
\def\be{\begin{equation}}
\def\ee{\end{equation}}
\begin{document}

\thispagestyle{empty}
\title{\bf Stability of an abstract--wave equation with delay and a Kelvin--Voigt damping}
\author{Ka\"{\i}s AMMARI
\thanks{UR Analysis and Control of Pde, UR 13ES64, Department of Mathematics,
Faculty of Sciences of Monastir, University of Monastir, 5019 Monastir, Tunisia,
e-mail : kais.ammari@fsm.rnu.tn}
\, , Serge NICAISE \thanks{Universit\'e de Valenciennes et du Hainaut Cambr\'esis, LAMAV, FR CNRS 2956,  59313 Valenciennes Cedex 9, France, e-mail: snicaise@univ-valenciennes.fr}
\, and \, Cristina PIGNOTTI \thanks{Dipartimento di Ingegneria e Scienze dell'Informazione e Matematica, Universit\`a di L'Aquila, Via Vetoio, Loc. Coppito, 67010 L'Aquila, Italy, \,
e-mail : pignotti@univaq.it}}
\date{}
\maketitle
{\bf Abstract.} {\small
In this paper we consider a stabilization problem for an abstract wave equation with delay and a Kelvin--Voigt damping.
We prove an exponential stability result for appropriate damping coefficients. The proof of the main result is based on a
frequency--domain approach.} \\

\noindent
{\bf 2010 Mathematics Subject Classification}: 35B35, 35B40, 93D15, 93D20.\\
{\bf Keywords}: Internal stabilization, Kelvin-Voigt damping, abstract wave equation with delay.

\section{Introduction}
\setcounter{equation}{0}
Our main goal is to study the internal stabilization of a delayed abstract wave equation with a Kelvin--Voigt damping. More precisely, given a constant time delay $\tau >0,$ we consider the
system  given by:
\begin{align}
{}&u^{\prime \prime}(t) + a \, BB^* u^\prime (t)  + BB^*u(t - \tau) = 0 ,  && \mbox{\rm in} \quad (0, + \infty),\label{a1}\\
{}&u(0) = u_0,  \quad  u^\prime(0)=u_1, &&
\label{a4}
\\
{}&B^*u(t-\tau) = f_0(t-\tau),  \quad  && \mbox{\rm in} \quad (0,\tau),\label{a5}
\end{align}
where $a>0$ is a constant, $B : { D} (B) \subset H_1 \rightarrow H$ is a linear unbounded operator
from a Hilbert space $H_1$ into another Hilbert space $H$ equipped with the respective norms $||\cdot||_{H_1}$, $||\cdot||_H$ and   inner products $(\cdot,\cdot)_{H_1}$, $(\cdot,\cdot)_H$,
and $B^* : { D}(B^*) \subset H \rightarrow H_1$ is the adjoint of $B$.
The initial datum $(u_0, u_1, f_0)$ belongs to a suitable space.

We suppose that the operator $B^*$ satisfies the following coercivity assumption: there exists $C > 0$
such that
\be
\label{cors}
\left\|B^* v\right\|_{H_1} \geq C \, \left\|v\right\|_H, \, \forall \, v \in { D}(B^*).
\ee
For shortness we set $V={ D}(B^*)$ and we assume that it is closed with the norm
$\|v\|_V:=\left\|B^* v\right\|_{H_1}$ and that it is compactly embedded into $H$.

Delay effects arise in many applications and practical problems and
it is well--known that an arbitrarily small delay may destroy the well--posedness of the problem
\cite{jordanetall,dreheretall,pruss:93,racke}
or
 destabilize a
system which is uniformly asymptotically stable in absence of delay
(see e.g. \cite{Datko,Datko97},  \cite{NPSicon}, \cite{racke}).
Different strategies were recently developed to restitute either the well--posedness
or the stability. In the first case, one idea is to add a non--delay term, see  \cite{batkaibook,pruss:93}
for the heat equation.
In the second case,
we    refer to \cite{amman,ANP,DLP,NPSicon,NVCOCV10} for stability results for systems with time delay
where a  standard feedback compensating the destabilizing delay effect is introduced.
Nevertheless recent papers reveal that particular choices of the delay may restitute exponential stability property, see \cite{Gugat, ANP1}.

Note that the above system is exponentially stable in absence of
time delay, and if $a > 0$. On the other hand if $a=0$
and $-B B^*$ corresponds to the Laplace operator with Dirichlet boundary conditions
in a bounded domain of $\rline^n$, problem \eqref{a1}--\eqref{a5} is not well--posed, see \cite{jordanetall,dreheretall,pruss:93,racke}.
Therefore
in this paper in order to restitute the well-posedness character and its stability
we propose to add the Kelvin--Voigt damping term $ a \, BB^* u^\prime$.
Hence the stabilization of problem \eqref{a1}--\eqref{a5} is performed using a frequency domain approach
 combined with a precise spectral analysis.

The paper is organized as follows. The second section deals with the
well--posedness of the problem while, in the third section,  we perform the spectral analysis
of the associated operator. In section \ref{sectasanalysis},
we prove
the exponential stability of the  system \rfb{a1}--\rfb{a5} if $\tau\leq a$. In the last section we give an example of an application.

\section{Existence results}
\setcounter{equation}{0}

In this section we will give a
well--posedness result for problem \eqref{a1}--\eqref{a5} by
using semigroup theory.

Inspired from \cite{NPSicon}, we introduce the auxiliary variable
\begin{equation}\label{defz0}
z(\rho ,t)= B^* u(t-\tau\rho ),\quad \ \rho\in (0,1), \ t>0.
\end{equation}
Then, problem \eqref{a1}--\eqref{a5}
is equivalent to
\begin{align}
{}&u^{\prime \prime} (t) + a \, BB^*u^\prime (t) + Bz(1,t) = 0 ,  && \mbox{\rm in} \quad (0, + \infty),\label{a1bis}\\
{}& \tau z_t(\rho,t)+z_{\rho}(\rho, t)=0\quad &&\mbox{\rm in}\quad (0,1)\times (0,+\infty ),\label{Pz}\\
{}&u(0) = u_0,  \quad  u^\prime(0)=u_1, &&  \quad \label{a4bis}
\\
{}&z(\rho, 0) = f_0(- \rho \tau),  \quad  && \mbox{\rm in} \quad (0,1),\label{a5bis}
\\
{}&z(0,t)=B^*u(t),\quad &&    \ t>0.\label{a6}
\end{align}
If we denote
$$
U:=\left (
u,
u^\prime,
z
\right )^\top,
$$
then
$$
U^{\prime}:=\left (
u^\prime,
u^{\prime \prime},
z_t
\right )^\top=
\left (
u^\prime,
- a BB^* u^\prime - Bz(1,t),
-\tau^{-1}z_{\rho}
\right )^\top.
$$
Therefore, problem (\ref{a1bis})--(\ref{a6}) can be rewritten as
\begin{equation}\label{formulA}
\left\{
\begin{array}{l}
U^{\prime}={\cal A} U,\\
U(0)=\left (
u_0,
u_1,
f_0(-\cdot\tau )
\right )^\top,
\end{array}
\right.
\end{equation}
where the operator ${\cal A}$ is defined by
$$
{\cal A} \left (
\begin{array}{l}
u\\
v\\
z
\end{array}
\right ):=
\left (
\begin{array}{l}
v\\
- a BB^* v - Bz(\cdot,1)\\
-\tau^{-1}z_{\rho}
\end{array}
\right ),
$$
\noindent with domain
\begin{equation}\label{domain}
\begin{array}{l}
\displaystyle{
{ D} ({\cal A}):=\Big \{\ (u,v,z)^\top\in
{ D}(B^*)
\times { D}(B^*) \times H^1(0,1; H_1)\ : a B^* v + z(1)\in { D}(B),}\\
\hspace{2.5cm} \displaystyle{{B^*u=z(0)}
 \Big\},}
\end{array}
\end{equation}
in the Hilbert space
\begin{equation}\label{space}
{\cal H}:= { D}(B^*) \times H \times L^2(0,1;H_1),
\end{equation}
equipped with the standard inner product
\[
((u,v,z), (u_1,  v_1,  z_1))_{\cal H}
=
(B^*u,B^*u_{1})_{H_1} + (v,v_1)_H \, + \,
\xi \int_0^1 (z,z_1)_{H_1} \, d\rho,
\]
where $\xi>0$ is a parameter fixed later on.

We will show that ${\cal A}$ generates a $C_0$ semigroup on ${\cal H}$
by proving that ${\cal A}- c Id$ is maximal dissipative for an appropriate choice of $c$ in function of $\xi, \tau$ and $a$.
Namely we prove the next result.
\begin{lemma}\label{lmaxdiss}
If $\xi  > \frac{2\tau}{a}$, then there exists $a^*>0$ such that ${\cal A}-a_*^{-1} Id$ is maximal dissipative in $\cal H$.
\end{lemma}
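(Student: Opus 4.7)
The plan is to verify the Lumer--Phillips conditions for $\mathcal{A} - a_*^{-1}\mathrm{Id}$. First, I compute $\operatorname{Re}\langle \mathcal{A}U, U\rangle_{\mathcal{H}}$ for $U = (u,v,z)^\top \in D(\mathcal{A})$. Using that for $(u,v,z) \in D(\mathcal{A})$ one has $aB^*v + z(1) \in D(B)$ and hence $(B(aB^*v + z(1)), v)_H = (aB^*v + z(1), B^*v)_{H_1}$, together with the identity $\operatorname{Re}\int_0^1 (z_\rho, z)_{H_1}\,d\rho = \tfrac{1}{2}(\|z(1)\|_{H_1}^2 - \|z(0)\|_{H_1}^2)$ and the compatibility $z(0) = B^*u$, I obtain
\[
\operatorname{Re}\langle \mathcal{A}U, U\rangle_{\mathcal{H}} = \operatorname{Re}(B^*v, B^*u)_{H_1} - a\|B^*v\|_{H_1}^2 - \operatorname{Re}(z(1), B^*v)_{H_1} - \tfrac{\xi}{2\tau}\|z(1)\|_{H_1}^2 + \tfrac{\xi}{2\tau}\|B^*u\|_{H_1}^2.
\]
Applying Young's inequality with weights $\epsilon_1, \epsilon_2 > 0$ to the two cross terms, the coefficients of $\|B^*v\|_{H_1}^2$ and $\|z(1)\|_{H_1}^2$ become $\tfrac{\epsilon_1+\epsilon_2}{2} - a$ and $\tfrac{1}{2\epsilon_2} - \tfrac{\xi}{2\tau}$. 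The hypothesis $\xi > 2\tau/a$ gives $\tau/\xi < a/2$, so I take $\epsilon_2 = \tau/\xi$ and any $\epsilon_1 \in (0, 2a - \tau/\xi)$, making both coefficients nonpositive. Setting $a_*^{-1} := \tfrac{1}{2\epsilon_1} + \tfrac{\xi}{2\tau}$ then gives $\operatorname{Re}\langle \mathcal{A}U, U\rangle_{\mathcal{H}} \le a_*^{-1}\|B^*u\|_{H_1}^2 \le a_*^{-1}\|U\|_{\mathcal{H}}^2$, i.e., dissipativity of $\mathcal{A} - a_*^{-1}\mathrm{Id}$.

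For maximality, I show that $\operatorname{Range}(\mu\mathrm{Id} - \mathcal{A}) = \mathcal{H}$ for some $\mu > 0$. Given $F = (f_1, f_2, f_3)^\top \in \mathcal{H}$, the third resolvent equation $\mu z + \tau^{-1}z_\rho = f_3$ with $z(0) = B^*u$ is an elementary first-order linear ODE, solvable explicitly as $z(\rho) = e^{-\mu\tau\rho}B^*u + \tau\int_0^\rho e^{-\mu\tau(\rho-s)}f_3(s)\,ds$; in particular $z(1) = e^{-\mu\tau}B^*u + F_3$ with $F_3 := \tau\int_0^1 e^{-\mu\tau(1-s)}f_3(s)\,ds \in H_1$. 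Setting $v = \mu u - f_1$ and substituting into the second resolvent equation reduces everything to finding $u \in V$ satisfying, weakly,
\[
\mu^2(u,\phi)_H + (a\mu + e^{-\mu\tau})(B^*u, B^*\phi)_{H_1} = (f_2 + \mu f_1, \phi)_H + a(B^*f_1, B^*\phi)_{H_1} - (F_3, B^*\phi)_{H_1}
\]
for all $\phi \in V$. The bilinear form on $V$ is continuous and coercive (by (\ref{cors}) the norm $\|B^*\cdot\|_{H_1}$ controls the ambient $H$-norm, and $\mu^2$, $a\mu + e^{-\mu\tau}$ are positive), the right-hand side is continuous on $V$, and Lax--Milgram yields a unique $u \in V$. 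Then $v = \mu u - f_1 \in V$ and $z \in H^1(0,1;H_1)$ with $z(0) = B^*u$; reading the variational identity backward gives $(aB^*v + z(1), B^*\phi)_{H_1} = (f_2 - \mu v, \phi)_H$ for every $\phi \in V$, which is precisely the statement that $aB^*v + z(1) \in D(B)$ with $B(aB^*v + z(1)) = f_2 - \mu v \in H$. Hence $U = (u,v,z)^\top \in D(\mathcal{A})$ and $(\mu\mathrm{Id} - \mathcal{A})U = F$.

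The main difficulty is the calibration of the Young's-inequality parameters in the dissipativity step: one must see that $\xi > 2\tau/a$ is exactly what lets the $\|B^*v\|^2$ and $\|z(1)\|^2$ contributions be absorbed simultaneously into the $-a\|B^*v\|^2$ and $-\tfrac{\xi}{2\tau}\|z(1)\|^2$ negatives, leaving only a finite residue on $\|B^*u\|^2$ that $a_*^{-1}\mathrm{Id}$ compensates. Once this balance is struck, the range condition is routine: decoupling $z$ via the transport ODE and applying Lax--Milgram to the resulting elliptic-type equation for $u$.
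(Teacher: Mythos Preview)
Your proof is correct and follows essentially the same route as the paper's: a direct computation of $\Re(\mathcal{A}U,U)_{\mathcal{H}}$ bounded via Young's inequality for the dissipativity, and a Lax--Milgram argument (after decoupling $z$ through the transport ODE) for the range condition. The only cosmetic difference is that you use two separate Young parameters $\epsilon_1,\epsilon_2$ where the paper uses a single $\varepsilon=a/2$ for both cross terms and sets $a_*^{-1}=\tfrac{1}{a}+\tfrac{\xi}{2\tau}$; both choices exploit $\xi>2\tau/a$ in the same way.
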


\begin{proof}
Take $U=(u,v,z)^T\in D({\cal A}).$ Then we have
\beqs
({\cal A}(u,v,z), (u,  v,  z))_{\cal H}
=
(B^*v,B^*u)_{H_1} - ((B(aB^*v+z(1)),v)_H
\\
-\xi \tau^{-1} \int_0^1 (z_\rho,z)_{H_1}\, d\rho.
\eeqs
Hence, we get
\beqs
({\cal A}(u,v,z), (u,  v,  z))_{\cal H}
=
(B^*v,B^*u)_{H_1} -(aB^*v+z(1),B^*v)_{H_1}
\\
-\frac{\xi}{2 \tau}\left\|z(1)\right\|_{H_1}^2\,
+\frac{\xi}{2 \tau}\|z(0)\|_{H_1}^2.
\eeqs
Hence reminding that $z(0)=B^*u$ and using Young's inequality we find that
$$
\begin{array}{l}
\Re ({\cal A}(u,v,z), (u,  v,  z))_{\cal H}
\\\hspace{1 cm}
\leq (\varepsilon-a)
\|B^*v\|^2_{H_1} +(\frac{1}{2\varepsilon}-\frac{\xi}{2 \tau})\|z(1)\|^2_{H_1}
+(\frac{1}{2\varepsilon}
+\frac{\xi}{2 \tau})\|B^*u\|^2_{H_1}.
\end{array}
$$
Chosing $\varepsilon= \frac{a}{2}$, we find that
\beqs
\Re ({\cal A}(u,v,z), (u,  v,  z))_{\cal H}
\leq  - \frac{a}{2} \,
\|B^*v\|^2_{H_1} +  (\frac{1}{a}-\frac{\xi}{2 \tau})\|z(1)\|_{H_1}^2
+(\frac{1}{a}
+\frac{\xi}{2 \tau})\|B^*u\|^2_{H_1}.
\eeqs
The choice of $\xi$ is equivalent to  $\frac{1}{a}-\frac{\xi}{2 \tau} <0$,
and therefore for $a_* = \left(\frac{1}{a} + \frac{\xi}{2\tau} \right)^{-1}$,
\be
\label{dissp}
\Re ({\cal A}(u,v,z), (u,  v,  z))_{\cal H}
\leq - \frac{a}{2} \,
\|B^*v\|^2_{H_1} + (\frac{1}{a} - \frac{\xi}{2 \tau})\|z(1)\|_{H_1}^2 + a_*^{-1} \, \|B^*u\|^2_{H_1}.
\ee
As $\|B^*u\|^2_{H_1}\leq \|(u,v,z)\|^2_{\cal{H}}$, we get
$$
\Re (({\cal A} - a_*^{-1} Id)(u,v,z), (u,  v,  z))_{\cal H}
\leq - \frac{a}{2} \,
\|B^*v\|^2_{H_1} + (\frac{1}{a} - \frac{\xi}{2 \tau})\|z(1)\|_{H_1}^2 \leq 0,
$$
which directly leads to the dissipativeness of ${\cal A} - a_*^{-1} Id$.

Let us go on with the maximality, namely let us  show that $\lambda I -{\cal A}$ is surjective for a fixed $\lambda >0.$
Given $(f,g,h)^T\in {\cal H},$ we look for a   solution $U=(u,v,z)^T\in D({\cal A}) $ of
\be\label{surj}
(\lambda I- {\cal A}) \left (
\begin{array}{l}
u\\
v\\
z
\end{array}
\right )=
 \left (
\begin{array}{l}
f\\
g\\
h
\end{array}
\right ),
\ee
that is, verifying
\begin{equation}\label{max}
\left\{
\begin{array}{l}
\lambda u-v=f,\\
\lambda v + B(aB^*v+ z(1)) = g,\\
\lambda z + \tau^{-1}z_{\rho}=h.
\end{array}
\right.
\end{equation}
Suppose that we have found $u$ with the appropriate regularity. Then,
\begin{equation}\label{numerare}
v=\lambda u -f
\end{equation}
and we can determine $z.$ Indeed, by (\ref{domain}),
\begin{equation}\label{F3}
z(0)= B^*u,
\end{equation}
and, from (\ref{max}),
\begin{equation}\label{F4}
\lambda z(\rho ) + \tau^{-1} z_{\rho} (\rho )=h(\rho)\quad \mbox{\rm for } \ \rho\in (0,1).
\end{equation}
Then, by (\ref{F3}) and (\ref{F4}), we obtain
\be\label{defz}
z(\rho) = B^*u e^{-\lambda\rho\tau} + \tau e^{-\lambda\rho\tau} \int_0^{\rho} h(\sigma ) e^{\lambda\sigma\tau} d\sigma.
\ee
In particular, we have
\begin{equation}\label{F5tilde}
z(1)=   B^*u e^{-\lambda\tau} + z_0,
\end{equation}
with $z_0\in H_1$ defined by
\begin{equation}\label{F5star}
z_0=\tau e^{-\lambda\tau}\int_0^1 h(\sigma ) e^{\lambda\sigma\tau} d\sigma.
\end{equation}
This expression in (\ref{max}) shows that the function $u$ verifies formally
$$
\lambda^2 u+B(aB^*(\lambda u-f)+    B^*u e^{-\lambda\tau} +z_0)=g+\lambda f,$$
that is,
\begin{equation}\label{F6}
\lambda^2 u+ (\lambda a+   e^{-\lambda\tau}) BB^*u= g+\lambda f +B(aB^* f) - Bz_{0}.
\end{equation}
Problem (\ref{F6}) can be reformulated as
\begin{equation}\label{F7}
(\lambda^2 u+ (\lambda a+   e^{-\lambda\tau}) BB^*u, w)_H =
(g+\lambda f +B(aB^* f)-Bz_{0},w)_H,  \quad \forall\
w \in V.
\end{equation}
Using the definition of the adjoint of $B$, we get
\begin{equation}\label{F8}
\lambda^2  (u, w)_H +(\lambda a+   e^{-\lambda\tau}) (B^*u,  B^*w)_{H_1} =
(g+\lambda f,w)_H + (aB^*f-z_0, B^*w)_{H_1},
\ \forall\,
w\in V.
\end{equation}
As the left-hand side of (\ref{F8})
is coercive on ${ D}(B^*)$,
the Lax--Milgram lemma guarantees the existence and uniqueness of a solution $u\in V$ of (\ref{F8}).
Once $u$ is obtained we define $v$ by \rfb{numerare} that belongs to $V$
and $z$ by \rfb{defz} that belongs to $H^1(0,1; H_1)$.
Hence we can set $r=a B^*v+z(1)$, it belongs to $H_1$ but owing to \rfb{F8}, it fulfils
\[
\lambda (v,w)_H+(r, B^*w)_{H_1}= (g,w)_H, \ \forall w\in { D}(B^*),
\]
or equivalently
\[
(r, B^*w)_{H_1}= (g-\lambda v ,w)_H,\  \forall w\in { D}(B^*).
\]
As $g-\lambda v\in H$, this implies that $r$ belongs to ${ D}(B)$ with
\[
Br=g-\lambda v.
\]
This shows that the triple $U=(u,v,z)$ belongs to ${ D} ({\cal A})$ and
satisfies \rfb{surj}, hence $\lambda I - {\cal A}$ is surjective
for every $\lambda >0.$
\end{proof}

We have then the following result.
\begin{proposition}\label{propexistunic}
The system
\rfb{a1}--\rfb{a5} is well--posed. More precisely, for every
$(u_0,u_1,f_0) \in \cH$, there exists a unique solution $(u,v,z) \in C(0,+\infty, \cH)$ of \rfb{formulA}. Moreover, if $(u_0,u_1,f_0)\in D(\cA)$ then $(u,v,z) \in C(0,+\infty, D(\cA)) \cap C^1(0,+\infty, {\mathcal  H})$ with $v=u^\prime$ and $u$ is indeed a solution  of \rfb{a1}--\rfb{a5}.
\end{proposition}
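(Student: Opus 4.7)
The plan is to deduce this directly from Lemma~\ref{lmaxdiss} via classical semigroup theory and then check that the abstract Cauchy problem \rfb{formulA} actually encodes the original system \rfb{a1}--\rfb{a5}.

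First, since Lemma~\ref{lmaxdiss} shows that $\cA - a_*^{-1}\mathrm{Id}$ is maximal dissipative on the Hilbert space $\cH$, the Lumer--Phillips theorem implies that $\cA - a_*^{-1}\mathrm{Id}$ generates a $C_0$-semigroup of contractions on $\cH$. Because $a_*^{-1}\mathrm{Id}$ is a bounded operator, the bounded perturbation theorem yields that $\cA$ itself generates a $C_0$-semigroup $(S(t))_{t\geq 0}$ on $\cH$, with $\|S(t)\|\leq e^{a_*^{-1} t}$. Applying this semigroup to the initial datum $U_0=(u_0,u_1,f_0(-\cdot\tau))^\top\in\cH$, the general Hille--Yosida theory gives a unique mild solution $U\in C([0,+\infty),\cH)$ of \rfb{formulA}. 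If in addition $U_0\in D(\cA)$, the same theory provides a unique classical solution with the regularity $U\in C([0,+\infty),D(\cA))\cap C^1([0,+\infty),\cH)$.

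It remains to identify the classical abstract solution $U=(u,v,z)^\top$ with a solution of \rfb{a1}--\rfb{a5}. The first line of $U'=\cA U$ reads $u'=v$ in $\cH$, so in particular $v=u'$ and $u\in C^1([0,+\infty),H)\cap C([0,+\infty),V)$. The second line gives $u''+aBB^*u'+Bz(\cdot,1)=0$ in $H$. To recover the delay term, note that the third line combined with the boundary condition $z(0,t)=B^*u(t)$ built into $D(\cA)$ is a pure transport equation with prescribed trace at $\rho=0$; using the initial condition $z(\rho,0)=f_0(-\rho\tau)$, the method of characteristics yields, for each fixed $t$,
\[
z(\rho,t)=\begin{cases}B^*u(t-\tau\rho), & \tau\rho\leq t,\\ f_0(t-\tau\rho), & \tau\rho>t,\end{cases}
\]
and in particular $z(1,t)=B^*u(t-\tau)$ for $t\geq\tau$, while for $t<\tau$ one uses the historical data $f_0(t-\tau)$, which is exactly the meaning of the initial delay condition \rfb{a5}. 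Substituting back, the second line reduces to \rfb{a1}, while the initial conditions \rfb{a4} follow from the definition of $U_0$.

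The only nonroutine point in this plan is the identification step in the last paragraph: one must verify carefully that the compatibility condition $B^*u=z(0)$ encoded in $D(\cA)$ is preserved by the flow (which is automatic once $U(t)\in D(\cA)$ for all $t\geq 0$) and that the trace $z(\cdot,1)$ appearing in the second equation is meaningful, which is ensured by $z\in C([0,+\infty),H^1(0,1;H_1))$ inherited from $U\in C([0,+\infty),D(\cA))$ so that the Sobolev embedding $H^1(0,1;H_1)\hookrightarrow C([0,1];H_1)$ makes the pointwise evaluation at $\rho=1$ continuous. Once this is in hand, the equivalence between \rfb{a1}--\rfb{a5} and \rfb{formulA} established in Section~2 (via the change of variable \eqref{defz0}) completes the proof.
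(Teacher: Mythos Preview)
Your proof is correct and follows exactly the approach the paper intends: in the paper, Proposition~\ref{propexistunic} is stated without proof as an immediate consequence of Lemma~\ref{lmaxdiss} and standard semigroup theory (Lumer--Phillips plus the trivial shift by $a_*^{-1}\mathrm{Id}$). You have simply filled in the details the authors left implicit, including the identification of the abstract solution with the original delayed system via the transport equation for $z$, which is precisely the equivalence set up in \rfb{a1bis}--\rfb{a6}.
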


\section{The spectral analysis \label{sectionspectre}}
\setcounter{equation}{0}
As ${ D}(B^*)$ is compactly embedded into $H$,   the operator $BB^* : { D}(BB^*) \subset H \rightarrow H$ has a compact resolvent. Hence let $(\lambda_k)_{k\in \nline^*}$
be  the set of eigenvalues of $BB^*$ repeated according to their multiplicity (that are positive real numbers
and are such that $\lambda_k\to +\infty$ as $k\to +\infty$)
and  denote by $(\varphi_k)_{k\in \nline^*}$ the corresponding eigenvectors
that form an orthonormal basis of $H$ (in particular for all   $k\in \nline^*$,
$BB^*\varphi_k=\lambda_k \varphi_k$).

\subsection{The discrete spectrum \label{ssectionspectrediscret}}
We have the following lemma.

\begin{lemma}\label{lspdiscret}
If $\tau\leq a$, then any eigenvalue $\lambda$ of $\cA$ satisfies
$\Re\lambda<0$.
\end{lemma}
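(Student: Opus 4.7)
The plan is to reduce the eigenvalue problem for $\cA$ to a scalar characteristic equation in $\lambda$, and then analyze it by separating real and imaginary parts; the critical input will be the elementary inequality $\sin w < w$ for $w>0$, which is what pins the threshold exactly at $\tau = a$. Concretely, I would take an eigenpair $\cA U = \lambda U$ with $U = (u,v,z)^\top \in D(\cA)$, $U\neq 0$. The three components give $v = \lambda u$, the transport ODE $z_\rho = -\lambda\tau\, z$ on $(0,1)$, and the middle relation; together with the coupling $z(0) = B^*u$ built into $D(\cA)$, the ODE integrates to $z(\rho) = B^*u\, e^{-\lambda\tau\rho}$, and in particular $z(1) = B^*u\, e^{-\lambda\tau}$. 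Substituting into the middle component yields
\[
\lambda^2 u + (a\lambda + e^{-\lambda\tau})\, BB^*u = 0.
\]
A short case analysis using (\ref{cors}) rules out $u=0$, $B^*u=0$, and $\lambda=0$ (each forces $U=0$). Taking the $H$-inner product with $u$ and using $(BB^*u,u)_H = \|B^*u\|_{H_1}^2$ then produces the scalar characteristic equation
\[
\lambda^2 + \mu\,(a\lambda + e^{-\lambda\tau}) = 0, \qquad \mu := \|B^*u\|_{H_1}^2/\|u\|_H^2 > 0.
\]

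Next I would argue by contradiction: suppose $\lambda = x+iy$ with $x\geq 0$. Separating real and imaginary parts,
\[
x^2 - y^2 + \mu a x + \mu e^{-x\tau}\cos(y\tau) = 0, \qquad y(2x+\mu a) = \mu e^{-x\tau}\sin(y\tau).
\]
The case $y=0$ collapses the first identity into a sum of strictly positive terms and is excluded at once. If $y\neq 0$, by complex conjugation I may assume $y>0$; then the left side of the second identity is $\geq \mu a y > 0$, whereas its right side is at most $\mu e^{-x\tau}\,y\tau \leq \mu y\tau$, with strict inequality since $\sin w < w$ for $w>0$. Dividing by $y$ gives $2x + \mu a < \mu\tau$, i.e. $2x < \mu(\tau - a) \leq 0$ under the hypothesis $\tau\leq a$, contradicting $x\geq 0$.

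The step I expect to require the most care is not this final estimate but the reduction itself, namely making sense of $BB^*u$: one uses that $aB^*v + z(1) = (a\lambda + e^{-\lambda\tau})\, B^*u$ lies in $D(B)$ by definition of $D(\cA)$, together with the observation that the scalar coefficient $a\lambda + e^{-\lambda\tau}$ cannot vanish (else the middle equation forces $u=0$). Once this housekeeping is done, the real/imaginary-part argument and the elimination of the degenerate cases are routine, and the hypothesis $\tau\leq a$ enters only at the very last line.
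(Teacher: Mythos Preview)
Your proof is correct and follows essentially the same route as the paper's: reduce the eigenvalue problem to the scalar equation $\lambda^2 + \mu(a\lambda + e^{-\lambda\tau}) = 0$ and analyze the imaginary part of $\lambda = x+iy$ via $\sin w < w$. The only cosmetic difference is that the paper phrases $\mu$ as an eigenvalue $\lambda_k$ of $BB^*$ (which it must be, since the scalar equation forces $BB^*u = -\lambda^2(a\lambda+e^{-\lambda\tau})^{-1}u$), whereas you take the Rayleigh quotient directly; and the paper treats the borderline case $\tau=a$ separately by arriving at $\sin(\tau y)=\tau y$, while you absorb it into the strict inequality $\sin w < w$ from the start. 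Both amount to the same argument.
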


\begin{proof}
Let $\lambda\in \mathbb C$ and  $U=(u,v,z)^\top\in D(\cA)$ be such that
 \[
(\lambda I- {\cal A}) \left (
\begin{array}{l}
u\\
v\\
z
\end{array}
\right )=0,
\]
or equivalently
\begin{equation}\label{eigenvalue}
\left\{
\begin{array}{l}
v = \lambda u,\\
- B (aB^* v+ z(\cdot,1))=\lambda v,\\
-\tau^{-1}z_{\rho}=\lambda z.
\end{array}
\right.
\end{equation}
By (\ref{F3}), we find that
\begin{equation}\label{F5eigen}
z(\rho )=\lambda^{-1} B^* v e^{-\lambda\rho\tau}.
\end{equation}
Using this property in (\ref{eigenvalue}), we find that
$u\in { D}(B^*)$ is solution of
\[
\lambda^2 u+(a \lambda + e^{-\lambda\tau}) BB^* u=0.
\]

Hence a non trivial solution exists if and only if
there exists $k\in {\mathbb N}^*$ such that
\begin{equation}\label{eigenvalue2}
\frac{\lambda^2}{a \lambda+    e^{-\lambda\tau}}=-\lambda_k.
\end{equation}

This condition implies that
$\lambda$ does not belong to

\begin{equation}\label{sigma}
\Sigma:=\{\lambda \in {\mathbb C}: a \lambda+     e^{-\lambda\tau}=0\},
\end{equation}
and that
\begin{equation}\label{eigenvalue3}
e^{-\lambda\tau}+\frac{\lambda^2}{\lambda_k}+a \lambda=0.
\end{equation}
Writing $\lambda=x+iy$, with $x, y\in \mathbb R$, we see that this identity is equivalent to
\begin{eqnarray}\label{eigenvalue4}
e^{-\tau x}\cos (\tau y)+\frac{x^2-y^2}{\lambda_k}+a x=0,
\\
-e^{-\tau x}\sin (\tau y)+\frac{2xy}{\lambda_k}+a y=0.
\label{eigenvalue5}
\end{eqnarray}

The second  equation is equivalent to
\[
e^{\tau x}\Big(\frac{2x }{\lambda_k}+a\Big)y=\sin (\tau y).
\]

Hence if $y\ne0$, we will get
\[
\frac{e^{\tau x}}{\tau}\Big(\frac{2x }{\lambda_k}+a\Big)=\frac{\sin (\tau y)}{\tau y}.
\]
As the modulus of the right-hand side is $\leq 1$, we obtain
\[
\Big|\frac{e^{\tau x}}{\tau}\Big(\frac{2x }{\lambda_k}+a\Big)\Big|\leq 1,
\]
or equivalently
\[
\Big|\frac{2x }{\lambda_k}+a \Big|\leq  \tau e^{-\tau x}.
\]

Therefore if $x\geq 0$, we find that
\[
 \frac{2x }{\lambda_k}+a  \leq  \tau e^{-\tau x}\leq \tau,
\]
which implies that
\[
 \frac{2x }{\lambda_k}   \leq    \tau-a.
\]
For $\tau<a$, we arrive to a contradiction.
For $\tau=a$,   the sole possibility is $x=0$
and by \rfb{eigenvalue5}, we find that
\[
\sin (\tau y)=\tau y,
\]
which yields $y=0$ and again we obtain a contradiction.

If $y=0$, we see that (\ref{eigenvalue5}) always holds
and (\ref{eigenvalue4}) is equivalent to
\[
e^{-\tau x} =-x(\frac{x }{\lambda_k}+a).
\]
This equation has no non--negative solutions $x$ since for $x\geq 0$,
the left hand side is positive while the right--hand side is non positive,
hence again if a solution $x$ exists, it has to be negative.

The proof of the lemma is complete.
\end{proof}

If $a<\tau$, we  now show that there exist some pairs of $(a,\tau)$ for which the system \rfb{a1}--\rfb{a5} becomes unstable.
Hence the condition $\tau \leq a$ is optimal for the stability of this  system.

\begin{lemma}\label{lspdiscretb}
There exist  pairs of  $(a,\tau)$ such that
$0<a<\tau$ and for which the associated operator $\mathcal A$ has a pure imaginary eigenvalue.
\end{lemma}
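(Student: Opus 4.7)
The plan is to directly exhibit pairs $(a,\tau)$ with $0<a<\tau$ that produce a purely imaginary root of the characteristic equation \rfb{eigenvalue3}, using an arbitrary eigenvalue $\lambda_k$ of $BB^*$. I would fix $k\in\mathbb N^*$ (say $k=1$ for concreteness) and substitute $\lambda=i\beta$ with $\beta>0$ into \rfb{eigenvalue3}. Splitting into real and imaginary parts yields the system
\[
\cos(\tau\beta)=\frac{\beta^2}{\lambda_k},\qquad \sin(\tau\beta)=a\beta.
\]
This is exactly the same strategy used in the proof of Lemma~\ref{lspdiscret}, but now read the other way: instead of ruling out solutions with $\Re\lambda\geq 0$, I aim to produce solutions with $\Re\lambda=0$.

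Next, I would parametrize the solution set. Any $\beta\in(0,\sqrt{\lambda_k})$ guarantees $\beta^2/\lambda_k\in(0,1)$, so I can set
\[
\tau:=\frac{1}{\beta}\arccos\!\Bigl(\frac{\beta^2}{\lambda_k}\Bigr),\qquad a:=\frac{1}{\beta}\sqrt{1-\frac{\beta^4}{\lambda_k^2}}.
\]
With this definition, both equations of the system are satisfied by construction (the choice of branch $\tau\beta\in(0,\pi/2)$ ensures $\sin(\tau\beta)>0$, matching $a>0$). Hence \rfb{eigenvalue3} holds with $\lambda=i\beta$, and the same computation as in Lemma~\ref{lspdiscret} (taking $u$ proportional to the eigenvector $\varphi_k$ and defining $v=i\beta u$, $z$ through \rfb{F5eigen}) produces a nontrivial element of $D(\cA)$ in the kernel of $i\beta\,\mathrm{Id}-\cA$, so $i\beta$ is a genuine eigenvalue.

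It then remains to check the constraint $0<a<\tau$. Setting $\theta:=\arccos(\beta^2/\lambda_k)\in(0,\pi/2)$, I get $a\beta=\sin\theta$ and $\tau\beta=\theta$, so $a<\tau$ is equivalent to $\sin\theta<\theta$, which is a standard elementary inequality valid for every $\theta>0$. The positivity $a>0$ is immediate from $\beta<\sqrt{\lambda_k}$. This yields a one-parameter family of pairs $(a,\tau)$ satisfying the claim, showing that the condition $\tau\leq a$ in Lemma~\ref{lspdiscret} cannot be relaxed.

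I do not expect a serious obstacle: the main work is the observation that the two real equations together are equivalent, via $\cos^2+\sin^2=1$, to the single algebraic relation $\beta^2(\beta^2/\lambda_k^2+a^2)=1$, after which one just inverts the $\arccos$; the inequality $a<\tau$ reduces tautologically to $\sin\theta<\theta$. The only detail worth stating explicitly is that the eigenvector obtained is nonzero, but this follows at once from the fact that $\varphi_k\neq 0$ and the construction of $(u,v,z)$ mirrors the calculation leading to \rfb{eigenvalue2}.
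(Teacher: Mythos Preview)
Your proof is correct and follows essentially the same approach as the paper: reduce to the two real equations $\cos(\tau\beta)=\beta^2/\lambda_k$, $\sin(\tau\beta)=a\beta$, satisfy them by an explicit choice of parameters, and conclude $a<\tau$ from the elementary inequality $\sin\theta<\theta$. The only difference is the parametrization --- the paper first solves the Pythagorean constraint for $y$ in terms of $a$ and then picks $\tau$, whereas you pick $\beta$ first and read off $\tau$ and $a$ directly --- but this is a cosmetic variation, and your version is arguably cleaner.
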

\begin{proof}
We look for a purely imaginary eigenvalue $iy$ of $\mathcal A$, hence system \rfb{eigenvalue4}--\rfb{eigenvalue5}
reduces to
\begin{eqnarray}\label{eigenvalue4x=0}
\cos (\tau y)=\frac{y^2}{\lambda_k},
\\
\sin (\tau y)=a y.
\label{eigenvalue5x=0}
\end{eqnarray}
Such a solution exists if
\begin{equation}\label{charaeq}
\frac{y^4}{\lambda_k^{2}}+ a^{2}y^2=1.
\end{equation}
One  solution of this equation is
\[
y_k=\left(\frac{-a^{2}\lambda_k^{2}+\sqrt{a^{4}\lambda_k^{4}+4\lambda_k^{2}}}{2}\right)^{\frac{1}{2}}.
\]
We now take any $\tau\in (0,\frac{\pi}{2y_k})$ and
$a=\frac{\sin (\tau y_k)}{y_k}$.
Then \rfb{eigenvalue5x=0} automatically holds, while \rfb{eigenvalue4x=0} is valid owing to \rfb{charaeq} (as $\cos (\tau y_k)>0$).
Finally $a<\tau$ because
\[
\frac{a}{\tau}=\frac{\sin (\tau y_k)}{\tau y_k}<1.
\]
Therefore with such a choice of $a$ and $\tau$, the operator $\mathcal A$ has a purely imaginary eigenvalue equal to $iy_k$.
\end{proof}

\subsection{The continuous spectrum \label{ssectionspectrecontinu}}

Inspired from section 3 of \cite{ANpart1:14}, by using a Fredholm alternative technique, we perform the spectral analysis of the operator $\cA$.

Recall that an operator $T$ from a Hilbert space $X$ into itself is called singular if there exists a sequence
$u_n\in D(T)$ with no convergent subsequence such that
$\|u_n\|_X=1$ and $Tu_n\to 0$ in $X$, see \cite{wolf:59}.
According to Theorem 1.14 of \cite{wolf:59} $T$ is   singular if and only if its kernel is infinite dimensional or its range is not closed.
Let $\Sigma$ be the set defined in (\ref{sigma}).
The following results hold:

\begin{theorem}\label{thmbeale}
\begin{enumerate}
\item If $\lambda\in \Sigma$, then $\lambda I-\cA$ is singular.
\item If $\lambda\not\in \Sigma$, then $\lambda I-\cA$ is a Fredholm operator of index zero.
\end{enumerate}
\end{theorem}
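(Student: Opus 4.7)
My plan is to treat the two cases separately, reducing in each case the spectral equation $(\lambda I-\cA)U=F$ to a scalar problem on $V$ by the same elimination carried out in the surjectivity step of Lemma~\ref{lmaxdiss}.

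\emph{Sketch for (1).} For $\lambda\in\Sigma$ I would construct a Weyl singular sequence, which by Theorem~1.14 of \cite{wolf:59} forces $\lambda I-\cA$ to be singular. The ansatz is dictated by the formal eigenvector computation in Lemma~\ref{lspdiscret}: set
$$
U_k := \alpha_k\,\bigl(\varphi_k,\ \lambda\,\varphi_k,\ e^{-\lambda\rho\tau}\,B^*\varphi_k\bigr)^\top,
$$
with $\alpha_k>0$ fixed by $\|U_k\|_\cH=1$. The key observation is that $aB^*v_k+z_k(1)=(a\lambda+e^{-\lambda\tau})\alpha_k B^*\varphi_k=0\in D(B)$ precisely because $\lambda\in\Sigma$, so $U_k\in D(\cA)$, and a direct check gives $(\lambda I-\cA)U_k=(0,\lambda^2\alpha_k\varphi_k,0)^\top$. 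Since $\|U_k\|^2_\cH$ is of order $\lambda_k|\alpha_k|^2$ as $k\to\infty$, the normalization forces $|\alpha_k|\to 0$ and the residual tends to $0$ in $\cH$; meanwhile $B^*u_k=\alpha_k B^*\varphi_k$ lies on the orthogonal family $\{B^*\varphi_k/\sqrt{\lambda_k}\}$ in $H_1$ with $V$-norm bounded below, ruling out any convergent subsequence.

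\emph{Sketch for (2).} For $\lambda\notin\Sigma$ set $\mu:=a\lambda+e^{-\lambda\tau}\neq 0$. The reduction of Lemma~\ref{lmaxdiss} turns $(\lambda I-\cA)U=F$, with $F=(f,g,h)$, into the weak equation
$$
\lambda^2(u,w)_H+\mu\,(B^*u,B^*w)_{H_1}=L_F(w):=(g+\lambda f,w)_H+(aB^*f-z_0,B^*w)_{H_1},\quad \forall w\in V,
$$
together with the explicit reconstructions $v=\lambda u-f$ and $z(\rho)=e^{-\lambda\rho\tau}B^*u+\tau e^{-\lambda\rho\tau}\int_0^\rho h(\sigma)e^{\lambda\sigma\tau}\,d\sigma$. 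The associated operator $A_\lambda:V\to V^*$ decomposes as an isomorphism---the Riesz map $u\mapsto\mu(B^*u,B^*\cdot)_{H_1}$, invertible because $\mu\neq 0$---plus the perturbation $u\mapsto\lambda^2(u,\cdot)_H$, which is compact because it factors through the compact embedding $V\hookrightarrow H$. Hence $A_\lambda$ is Fredholm of index zero. To transfer this to $\lambda I-\cA$ I would observe that the reduction yields a bijection $\ker(\lambda I-\cA)\cong\ker A_\lambda$ and that $\mathrm{range}(\lambda I-\cA)=\psi^{-1}(\mathrm{range}\,A_\lambda)$, where $\psi:\cH\to V^*$ is the bounded linear map $F\mapsto L_F$. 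Finally I would check that $\psi$ is surjective by choosing $f=g=0$ and using that $h\mapsto z_0$ from $L^2(0,1;H_1)$ to $H_1$ is onto, so that $(-z_0,B^*\cdot)_{H_1}$ realises every element of $V^*$ by Riesz representation in the inner product $(B^*\cdot,B^*\cdot)_{H_1}$.

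The main technical point is the bookkeeping of the $D(\cA)$ regularity through the reduction: for a weak solution $u$ of the scalar equation I must verify that the reconstructed $r:=aB^*v+z(1)$ actually belongs to $D(B)$. This will follow by rewriting the weak equation as $(r,B^*w)_{H_1}=(g-\lambda v,w)_H$ for all $w\in V$, exactly as at the end of the maximality argument in Lemma~\ref{lmaxdiss}; since $g-\lambda v\in H$, this identifies $r\in D(B)$ with $Br=g-\lambda v$. Combined with the surjectivity of $\psi$, this yields that $\mathrm{range}(\lambda I-\cA)$ is closed with codimension $\dim\ker A_\lambda=\dim\ker(\lambda I-\cA)$, i.e., Fredholmness of index zero.
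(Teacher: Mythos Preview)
Your proposal is correct and follows essentially the same route as the paper: the same Weyl sequence built from the $\varphi_k$ for part~(1), and for part~(2) the same reduction to the sesquilinear operator $A_\lambda:V\to V'$, shown to be Fredholm of index zero by a compact-perturbation-of-isomorphism argument, followed by a transfer of kernel and cokernel dimensions back to $\lambda I-\cA$. The one noteworthy difference is in that transfer step: the paper proves closedness of $R(\lambda I-\cA)$ directly via a Cauchy-sequence argument and then establishes $\operatorname{codim}R(\lambda I-\cA)=\operatorname{codim}R(A_\lambda)$ by two separate inequalities, whereas you package both facts at once through the identity $R(\lambda I-\cA)=\psi^{-1}(R(A_\lambda))$ together with the surjectivity of $\psi:\cH\to V'$, which is a tidier way to arrive at the same conclusion.
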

\begin{proof}
For the proof of point 1, let us fix $\lambda\in \Sigma$
and for all $k\in \nline^*$ set
\[
U_k=(u_k, \lambda u_k, B^*u_k e^{-\lambda \tau\cdot})^\top,
\]
with $u_k=\frac{1}{\sqrt{\lambda_k}} \varphi_k$. Then $U_k$ belongs to ${ D}({\cal A})$
and easy calculations yield (due to the assumption $\lambda\in \Sigma$)
\[
(\lambda I-{\cal A})U_k=
\lambda^2 (0,u_k, 0)^\top.
\]
Therefore we deduce that
\[
\|(\lambda I-{\cal A})U_k\|_{\h}\to 0, \hbox{ as } k\to\infty.
\]
Moreover due to the property $\| B^*u_k \|_{H_1}=1$, there exist positive constants  $c, C,$ such that
\[
c\leq \|U_k\|_{\h}\leq C, \forall\ k\in \nline^*.
\]

This shows that $\lambda I-{\cal A}$ is singular.

For all $\lambda \in {\mathbb C}$, introduce the (linear and continuous) mapping
$A_\lambda$ from $V$ into its dual by
\[
\langle A_\lambda v, w\rangle _{V'-V} =
\lambda^2 (v,w)_H + (a\lambda + e^{-\lambda\tau})  (B^* v,  B^* w)_{H_1}, \
\forall\  v,w\in { D}(B^*).
\]
Then from the proof of Lemma \ref{lmaxdiss}, we know that for $\lambda>0$,
$A_\lambda$ is an isomorphism.

Now for $\lambda \in {\mathbb C}\setminus \Sigma$, we can introduce the operator
\[
B_\lambda=(a\lambda + e^{-\lambda\tau})^{-1}A_\lambda.
\]
Hence for $\lambda \in   {\mathbb C}\setminus \Sigma$, $A_\lambda$ is a Fredholm operator of index 0
if and only if $B_\lambda$ is a Fredholm operator of index 0.
Furthermore for $\lambda,\mu \in {\mathbb C}\setminus \Sigma$
as
$B_\lambda-B_\mu$ is a multiple of the identity operator, due to the compact embedding of $V$ into $V'$,
and as $B_\mu$ is an isomorphism for $\mu>0$, we finally deduce that
$A_\lambda$ is a Fredholm operator of index 0 for all  $\lambda \in   {\mathbb C}\setminus \Sigma$.

Now we  readily check  that, for any $\lambda\in \C\setminus \Sigma$, we have the equivalence

\begin{equation}\label{serge14/08:2}
u\in \ker A_\lambda \iff (u,\lambda u, B^*u e^{-\lambda \tau \cdot})^\top \in \ker (\lambda I-\A).\end{equation}

This equivalence implies that

\be\label{kernel}
\hbox{ dim } \ker (\lambda I-\A)=\hbox{ dim } \ker A_\lambda,\  \forall\  \lambda\in \C\setminus \Sigma.
\ee

For the range property for all $\lambda\in \C\setminus \Sigma$ introduce the inner product
$$
(u,z)_{\lambda, V}:=\Big((u,\lambda u, B^*u e^{-\lambda \tau \cdot})^\top,
(z,\lambda z, B^*z e^{-\lambda \tau \cdot})^\top\big)_\h,
$$
on $V$ whose associated norm is equivalent to the standard one.

Denote by $\{y^{(i)}\}_{i=1}^{N}$ an orthonormal basis of $\ker A_\lambda$ for this new inner product
(for shortness the dependence of $\lambda$ is dropped), i.e.%, such that
$$
(y^{(i)},y^{(j)})_{\lambda, V}=\delta_{ij}, \forall\  i,j=1, \ldots, {N}.
$$
Finally, for all $i=1, \ldots, {N}$,  we set
$$
Z^{(i)}=(y^{(i)},\lambda y^{(i)}, B^*y^{(i)}e^{-\lambda\tau\cdot })^\top,
$$
the element of $\ker (\lambda I-\A)$ associated with $y^{(i)}$ that are orthonormal with respect to the inner product of $\h$.

Let us now show that
for all $\lambda\in \C\setminus \Sigma$, the range $R(\lambda I-\A)$ of $\lambda I-\A$ is closed.
Indeed, let us consider a sequence   $U_n=(u_n,v_n,z_n)^\top\in { D}(\A)$ such that
\begin{equation}\label{serge14/08:7}
(\lambda I-\A)U_n=F_n=(f_{n},g_{n},h_{n})^\top\to F=(f,g,h)^\top
\hbox{ in } \h.
\end{equation}
Without loss of generality we can assume that
\begin{equation}\label{serge14/08:5}
(U_n, Z^{(i)})_\h=-\alpha_{n, i}, \ \forall\  i=1, \ldots, {N},
\end{equation}
where
$$
\alpha_{n, i}:=((0, f_{n}, -\tau e^{-\lambda\tau\cdot }\int_0^{\cdot} h_n(\sigma) e^{\lambda\sigma\tau } d\sigma )^\top, Z^{(i)})_\h.
$$
Indeed, if this is not the case, we can consider
$$\tilde U_n=U_n-\sum_{i=1}^{N} \beta_i Z^{(i)}$$
that still belongs to $D(\A)$ and satisfies
$$
(\lambda I-\A)\tilde U_n=F_n,
$$
as well as
$$
(\tilde U_n, Z^{(i)})_\h=-\alpha_{n, i}, \ \forall\  i=1, \ldots, {N},
$$
 by setting
$$\beta_i= (U_n, Z^{(i)})_\h+\alpha_{n, i}, \  \forall\  i=1, \ldots, {N}.
$$

Note that the condition (\ref{serge14/08:5}) is equivalent to
$$
(u_n, y^{(i)})_{\lambda,V}=0,\  \forall\ i=1, \ldots, {N}.
$$
In other words,
\begin{equation}\label{serge14/08:6}
u_n\in (\ker A_\lambda)^{\perp_{\lambda,V}},
\end{equation}
where ${ }^{\perp_{\lambda,V}}$ means that the orthogonality is taken with respect to the inner product
$(\cdot,\cdot)_{\lambda, V}$.

Returning  to (\ref{serge14/08:7}), the arguments of the proof of
Lemma \ref{lmaxdiss} imply that
$$
A_\lambda u_n= L_{F_n} \hbox{ in } V',
$$
where $L_F$ is defined by
\[
L_F(w):=(g,w)_H -
\tau e^{-\lambda\tau}(\int_0^1 h(\sigma ) e^{\lambda\sigma\tau} d\sigma,
B^*w)_{H_1}+(\lambda f+a B^*f, w)_{H_1},
\]
when $F=(f,g,h)^\top$. But it is easy to check that
$$
L_{F_n} \to L_F\hbox{ in } V'.
$$
Moreover, as $\lambda\in \C\setminus \Sigma$, $A_\lambda$
is an isomorphism from
$(\ker A_\lambda)^{\perp_{\lambda,V}}$ into $R(A_\lambda)$, hence by (\ref{serge14/08:6}) we deduce that there exists a positive constant $C(\lambda)$ such that
$$
\|u_n-u_m\|_{V}\leq C(\lambda) \|L_{F_n} -L_{F_m}\|_{V'}, \forall\  n,m \in \nline.
$$
Hence, $(u_n)_n$ is a Cauchy sequence in $V$, and therefore there exists $u\in V$ such that
$$
u_n\to u \ \hbox{ in } V,
$$
as well as
$$
A_\lambda u= L_{F}\ \hbox{ in } V'.
$$
Then  defining  $v$ by \rfb{numerare}
and $z$ by \rfb{defz}, we deduce that
$U:=(u,v,z)^\top$ belongs to ${ D}(\A)$ and
$$
(\lambda I-\A)U=F.
$$
In other words, $F$ belongs to $R(\lambda I-\A)$. The closedness of $R(\lambda I-\A)$ is thus proved.

At this stage, for any $\lambda\in \C\setminus \Sigma$, we show that
\begin{equation}\label{serge14/08:3}
 \hbox{ codim } R(A_\lambda)=\hbox{ codim } R(\lambda I-\A),\end{equation}
where for $W\subset \h$, codim $W$ is the dimension of the orthogonal in $\h$ of $W$,
while for $W'\subset V'$, codim $W'$ is the dimension of the annihilator
\[
A:=\{v\in V:
\langle v, w\rangle_{V-V'}=0,\ \forall\ w\in W'\},
\]
 of $W'$ in $V$.

Indeed, let us set $N= \hbox{ codim } R(A_\lambda)$, then there exist
$N$ elements $\varphi_i\in V,$  $i=1,\ldots, N,$ such that
$$
f\in R(A_\lambda) \iff f\in V' \hbox{ and } \langle f,\varphi_i\rangle_{V'-V}
= 0,\ \forall\  i=1,\ldots, N.
$$
Consequently, for $F\in \h$, if $L_F$ (that belongs to $V'$) satisfies
\begin{equation}\label{serge14/08:10}
L_F(\varphi_i)= 0,\ \forall\  i=1,\ldots, N,
\end{equation}
then there exists a solution $u\in V$ of
$$
A_\lambda u=L_F \hbox{ in V'},
$$
and   the arguments of the proof of
Lemma \ref{lmaxdiss} imply that $F$ is in $R(\lambda I-\A)$. Hence, the $N$ conditions on $F\in \h$ from
(\ref{serge14/08:10}) allow to show that it belongs to $R(\lambda I-\A)$,
and therefore
\begin{equation}\label{serge14/08:12}
\hbox{ codim } R(\lambda I-\A)\leq N=\hbox{ codim } R(A_\lambda).
\end{equation}
This shows that $\lambda I-\A$ is a Fredholm operator.

Conversely, set $M= \hbox{ codim } R(\lambda I-\A)$, then there exist $M$ elements
$\Psi_i=(u_i,v_i, z_i)\in \h,$  $i=1,\ldots, M,$ such that
$$
F\in R(\lambda I-\A) \iff F\in \h \hbox{ and } (F,\Psi_i)_{\h}= 0,\forall\  i=1,\ldots, M.
$$
Then, for any $g\in H$,
if
\begin{equation}\label{serge14/08:11}
(g,v_i)_{H}=((0,g,0)^\top,\Psi_i)_{\h}= 0,\forall\  i=1,\ldots, M,
\end{equation}
there exists $U=(u,v,z)^\top\in { D}(\A)$ such that
$$
(\lambda I-\A)U=(0,g,0),
$$
which implies that
\[
A_\lambda u=g.
\]
This shows that
$$
R(A_\lambda)\supset H_0,$$
where $H_0:=
\{g\in H \hbox{ satisfying }(\ref{serge14/08:11})\}$.
This inclusion implies that (here $\perp$ means the annihilator of the set in $V$)
\[
R(A_\lambda)^\perp\subset H_0^\perp.
\]
Therefore
\beqs
R(A_\lambda)^\perp
&\subset& \{v\in V : \langle v, g\rangle_{V-V'}=0, \forall\ g\in H_0\}
\\
&=&\{v\in V : ( v, g)_H=0, \forall\ g\in H_0\}
\\
&\subset& \hbox{ Span }\{v_i\}_{i=1}^M\cap V.
\eeqs
Hence,
\begin{equation}\label{serge14/08:13}
\hbox{ codim } R(A_\lambda)\leq M=\hbox{ codim } R(\lambda I-\A).
\end{equation}

The inequalities (\ref{serge14/08:12}) and (\ref{serge14/08:13}) imply (\ref{serge14/08:3}).
 \end{proof}
\begin{lemma}\label{lsigma}
If $\tau\leq a$, then
\[
\Sigma\subset \{\lambda \in {\mathbb C}:\Re\lambda<0\}.\]
\end{lemma}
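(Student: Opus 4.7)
The plan is to rule out $\Re\lambda\geq 0$ for any $\lambda\in\Sigma$. I write $\lambda=x+iy$ with $x,y\in\RR$ and separate the real and imaginary parts of $a\lambda+e^{-\lambda\tau}=0$. This yields the system
\[
ax+e^{-\tau x}\cos(\tau y)=0,\qquad ay-e^{-\tau x}\sin(\tau y)=0,
\]
and I will argue by contradiction, assuming $x\geq 0$ and splitting into the cases $y=0$ and $y\neq 0$.

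If $y=0$, the real equation reduces to $ax+e^{-\tau x}=0$; but for $x\geq 0$ both summands are nonnegative and the second is strictly positive, so the identity cannot hold.

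If $y\neq 0$, the imaginary equation can be rewritten as
\[
a=\tau\,e^{-\tau x}\,\frac{\sin(\tau y)}{\tau y}.
\]
Using $e^{-\tau x}\leq 1$ (valid since $x\geq 0$) together with the elementary inequality $|\sin u/u|\leq 1$, I get $a\leq \tau$. Combined with the standing hypothesis $\tau\leq a$, this forces $\tau=a$ and simultaneously equality in both bounds, namely $e^{-\tau x}=1$ and $|\sin(\tau y)/(\tau y)|=1$. The first equality gives $x=0$, while the second forces $\tau y=0$, hence $y=0$, contradicting the assumption $y\neq 0$.

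The only step that is not completely routine is handling the borderline case $\tau=a$, where one has to carefully propagate equality through the chain of inequalities to contradict $y\neq 0$; in the strict case $\tau<a$ the bound $a\leq\tau e^{-\tau x}\leq \tau$ already produces an immediate contradiction without further work.
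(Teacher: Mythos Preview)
Your proof is correct and follows essentially the same route as the paper. The paper's own proof simply observes that the system $ax+e^{-\tau x}\cos(\tau y)=0$, $ay-e^{-\tau x}\sin(\tau y)=0$ is the limit $k\to\infty$ of the system \rfb{eigenvalue4}--\rfb{eigenvalue5} and then invokes the argument already given in Lemma~\ref{lspdiscret}; unpacking that reference yields exactly your case split on $y=0$ versus $y\neq 0$, the use of $|\sin u/u|\leq 1$, and the equality analysis when $\tau=a$.
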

 \begin{proof}
Let  $\lambda=x+iy  \in\Sigma$, with $x, y\in \mathbb R$ we deduce that
\begin{eqnarray*}
ax+e^{-\tau x}\cos (\tau y)=0,
\\
a y-e^{-\tau x}\sin (\tau y)=0.
\end{eqnarray*}
This corresponds to the system (\ref{eigenvalue4})--(\ref{eigenvalue5}) with $k=\infty$,
hence the arguments as in the proof of Lemma \ref{lspdiscret} yield the result.
\end{proof}

\begin{corollary} \label{locspec}
It holds
\[
\sigma(\cA)=\sigma_{pp}(\cA)\cup  \Sigma,
\]
and
therefore if $\tau\leq a$
\[
\sigma(\cA)\subset \{\lambda \in {\mathbb C}:\Re\lambda<0\}.\]
\end{corollary}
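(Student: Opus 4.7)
The plan is to combine the three preceding results: Theorem \ref{thmbeale} (which dichotomizes the behaviour of $\lambda I-\cA$ according to whether $\lambda\in\Sigma$), Lemma \ref{lspdiscret} (which locates the point spectrum in the left half plane when $\tau\leq a$), and Lemma \ref{lsigma} (which locates $\Sigma$ there as well).

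First I would establish the inclusion $\sigma(\cA)\subset \sigma_{pp}(\cA)\cup\Sigma$. Fix $\lambda\in\sigma(\cA)$ with $\lambda\notin\Sigma$. By Theorem \ref{thmbeale}(2), the operator $\lambda I-\cA$ is Fredholm of index zero, so
\[
\dim\ker(\lambda I-\cA)=\operatorname{codim} R(\lambda I-\cA),
\]
and $R(\lambda I-\cA)$ is closed. Since $\lambda I-\cA$ cannot be bijective (as $\lambda\in\sigma(\cA)$) and is of index zero, it cannot be injective either, so $\ker(\lambda I-\cA)\neq\{0\}$ and therefore $\lambda\in\sigma_{pp}(\cA)$.

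Next I would establish the reverse inclusion $\sigma_{pp}(\cA)\cup\Sigma\subset\sigma(\cA)$. The inclusion $\sigma_{pp}(\cA)\subset\sigma(\cA)$ is immediate. For $\lambda\in\Sigma$, Theorem \ref{thmbeale}(1) asserts that $\lambda I-\cA$ is singular in the sense of Wolf, hence either its kernel is infinite dimensional or its range fails to be closed; in either case $\lambda I-\cA$ is not boundedly invertible on $\cH$, so $\lambda\in\sigma(\cA)$. This yields the equality $\sigma(\cA)=\sigma_{pp}(\cA)\cup\Sigma$.

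Finally, assuming $\tau\leq a$, Lemma \ref{lspdiscret} gives $\sigma_{pp}(\cA)\subset\{\Re\lambda<0\}$ and Lemma \ref{lsigma} gives $\Sigma\subset\{\Re\lambda<0\}$; taking the union delivers the desired location of $\sigma(\cA)$. The only mildly delicate point is the direction $\sigma(\cA)\setminus\Sigma\subset\sigma_{pp}(\cA)$, which hinges crucially on the index-zero Fredholm property supplied by Theorem \ref{thmbeale}(2); everything else is a direct assembly of the quoted lemmas.
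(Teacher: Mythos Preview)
Your proof is correct and follows essentially the same route as the paper: you use Theorem \ref{thmbeale}(2) to show that any $\lambda\in\sigma(\cA)\setminus\Sigma$ must lie in $\sigma_{pp}(\cA)$ via the Fredholm index-zero alternative, invoke Theorem \ref{thmbeale}(1) for $\Sigma\subset\sigma(\cA)$, and then combine Lemmas \ref{lspdiscret} and \ref{lsigma} for the half-plane inclusion. The only difference is that you spell out both inclusions explicitly, whereas the paper compresses the first part into the single line $\mathbb{C}\setminus\Sigma\subset\sigma_{pp}(\cA)\cup\rho(\cA)$ and declares the equality immediate.
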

\begin{proof}
By Theorem \ref{thmbeale},
\[
{\mathbb C}\setminus \Sigma\subset \sigma_{pp}(\cA)\cup \rho(\cA).
\]
The first assertion directly follows.

The second assertion follows from Lemmas \ref{lspdiscret}
and \ref{lsigma}.
\end{proof}

\section{Asymptotic behavior \label{sectasanalysis}}
In this section, we show that if  $\tau\leq a$ and $\xi > \frac{2\tau}{a}$, the semigroup $e^{t\cA}$ decays to the null steady state with an exponential decay rate. To obtain this, our technique is based on a frequency domain approach and combines a contradiction argument to carry out a special analysis of the resolvent.

\begin{theorem} \label{lr}
If $\xi > \frac{2\tau}{a}$ and $\tau\leq a$, then there exist  constants $C, \omega > 0$ such that the semigroup $e^{t\cA}$ satisfies the following estimate
\be
\label{estexp}
\left\| e^{t \cA}\right\|_{\cL(\cH)} \leq C \, e^{-\omega t}, \ \forall \ t > 0.
\ee
\end{theorem}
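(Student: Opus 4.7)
The plan is to apply the frequency domain characterization of exponential stability for $C_0$-semigroups on Hilbert spaces (Huang--Prüss / Gearhart's theorem): since $\cA$ generates a $C_0$-semigroup (Proposition \ref{propexistunic}), the exponential decay \rfb{estexp} is equivalent to
\[
i\mathbb{R}\subset \rho(\cA)\quad\text{and}\quad \sup_{\beta\in\mathbb{R}} \|(i\beta I-\cA)^{-1}\|_{\cL(\cH)}<\infty.
\]
The first condition is already in hand: by Corollary \ref{locspec}, $\sigma(\cA)\subset\{\lambda:\Re\lambda<0\}$, so in particular the imaginary axis lies in $\rho(\cA)$. It only remains to establish the resolvent bound, which I would do by contradiction.

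Assume, for contradiction, that the resolvent is unbounded on $i\mathbb{R}$. Then there exist sequences $\beta_n\in\mathbb{R}$ and $U_n=(u_n,v_n,z_n)^{\top}\in D(\cA)$ with $\|U_n\|_{\cH}=1$ such that $F_n:=(i\beta_n I-\cA)U_n=(f_n,g_n,h_n)^{\top}\to 0$ in $\cH$. Taking the real part of $(F_n,U_n)_{\cH}$ and using the computation of $\Re(\cA U,U)_{\cH}$ from the proof of Lemma \ref{lmaxdiss} together with $\xi>2\tau/a$ gives some a priori control on $\|B^*v_n\|$ and $\|z_n(1)\|$, but not enough to conclude directly. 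So I would instead split the argument into two regimes.

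The low-frequency case $\beta_n\to \beta^*\in\mathbb{R}$ (after extraction) is handled by writing
\[
U_n=(i\beta^*I-\cA)^{-1}\bigl(F_n+i(\beta^*-\beta_n)U_n\bigr),
\]
which, together with $i\beta^*\in\rho(\cA)$ and $\|U_n\|=1$, immediately forces $U_n\to 0$, a contradiction. The real work is the high-frequency case $|\beta_n|\to\infty$. Here I would exploit the three component equations $i\beta_n u_n-v_n=f_n$, $i\beta_n v_n+aBB^*v_n+Bz_n(1)=g_n$, $i\beta_n z_n+\tau^{-1}(z_n)_\rho=h_n$, solve the last one explicitly as
\[
z_n(\rho)=B^*u_n\,e^{-i\beta_n\rho\tau}+\tau e^{-i\beta_n\rho\tau}\int_0^{\rho}h_n(\sigma)e^{i\beta_n\sigma\tau}d\sigma,
\]
and note that the correction term has $L^2(0,1;H_1)$-norm controlled by $\tau\|h_n\|_{L^2}\to 0$; in particular $e_n:=z_n(1)-e^{-i\beta_n\tau}B^*u_n\to 0$ in $H_1$. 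Eliminating $v_n=i\beta_n u_n-f_n$ from the second equation yields
\[
-\beta_n^2 u_n+\bigl(ai\beta_n+e^{-i\beta_n\tau}\bigr)BB^*u_n=g_n+aBB^*f_n-Be_n.
\]
Pairing this identity with $u_n$ in $H$ and using $(BB^*\cdot,u_n)_H=(B^*\cdot,B^*u_n)_{H_1}$, the right-hand side becomes a sum of terms bounded by $\|g_n\|_H\|u_n\|_H+a\|B^*f_n\|\|B^*u_n\|+\|e_n\|\|B^*u_n\|$, all of which tend to $0$ since $F_n\to 0$ and $\|B^*u_n\|\leq 1$. Separating real and imaginary parts of the resulting scalar identity yields
\[
-\beta_n^2\|u_n\|_H^2+\cos(\beta_n\tau)\|B^*u_n\|^2_{H_1}\to 0,\qquad \bigl(a\beta_n-\sin(\beta_n\tau)\bigr)\|B^*u_n\|^2_{H_1}\to 0.
\]
Since $|a\beta_n-\sin(\beta_n\tau)|\geq a|\beta_n|-1\to\infty$, the second relation forces $\|B^*u_n\|_{H_1}\to 0$; feeding this into the first gives $\beta_n^2\|u_n\|_H^2\to 0$, hence $|\beta_n|\|u_n\|_H\to 0$ and therefore $\|v_n\|_H\leq |\beta_n|\|u_n\|_H+\|f_n\|_H\to 0$. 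The formula for $z_n$ then yields $\|z_n\|_{L^2(0,1;H_1)}\to 0$, so $\|U_n\|_{\cH}\to 0$, contradicting $\|U_n\|_{\cH}=1$.

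The main obstacle is the high-frequency reduction: the dissipation inequality from Lemma \ref{lmaxdiss} does not by itself eliminate the destabilizing $\|B^*u\|^2$ term on its right-hand side, so one cannot simply conclude from $-\Re(\cA U_n,U_n)_{\cH}\to 0$. The delicate point is instead to control $z_n(1)$ by $B^*u_n$ up to a small error and to extract enough cancellation from the imaginary part of the reduced equation; this is precisely where the assumption $\tau\leq a$ (through $|a\beta_n-\sin\beta_n\tau|\geq a|\beta_n|-1$, and ultimately through the exclusion of imaginary eigenvalues in Lemma \ref{lspdiscret}) becomes essential.
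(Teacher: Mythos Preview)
Your argument is correct, but it takes a longer detour than the paper's in the high-frequency regime. You assert that the dissipation estimate \rfb{dissp} ``is not enough to conclude directly'' because of the bad term $a_*^{-1}\|B^*u_n\|_{H_1}^2$ on its right-hand side; the paper shows that in fact it \emph{is} enough, via a single substitution. From the first component equation one has $B^*u_n=(B^*v_n+B^*f_n)/\lambda_n$, so $\|B^*u_n\|_{H_1}^2\le 2|\lambda_n|^{-2}(\|B^*v_n\|_{H_1}^2+\|B^*f_n\|_{H_1}^2)$; for $|\lambda_n|$ large this is absorbed by the good term $\frac{a}{2}\|B^*v_n\|_{H_1}^2$, and the inequality $\|F_n\|_\cH\ge -\Re(\cA Z_n,Z_n)_\cH$ then yields $B^*v_n\to 0$ and $z_n(1)\to 0$ directly, from which $v_n\to 0$, $u_n\to 0$ in $V$, and $z_n\to 0$ in $L^2$ follow exactly as in your last step. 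Your route---solving the $z$-equation, eliminating $v_n$, pairing the reduced identity with $u_n$, and reading off $\|B^*u_n\|\to 0$ from the imaginary part---works as well, and has the pleasant feature of mirroring the eigenvalue computation of Lemma~\ref{lspdiscret}; but note two small points. First, in your reduced equation you dropped the term $i\beta_n f_n$ on the right-hand side; this is harmless since $(i\beta_n f_n,u_n)_H=(f_n,v_n+f_n)_H\to 0$, but it should be recorded. Second, your closing sentence is slightly misleading: the inequality $|a\beta_n-\sin(\beta_n\tau)|\ge a|\beta_n|-1$ holds for all $a,\tau>0$, so the hypothesis $\tau\le a$ is \emph{not} used in your high-frequency estimate (nor in the paper's); it enters only through Corollary~\ref{locspec} to secure $i\rline\subset\rho(\cA)$.
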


\begin{proof} [Proof of theorem  \ref{lr}]
We will employ the following frequency domain theorem for uniform stability from
\cite[Thm 8.1.4]{JacobZwart} of a $C_0$ semigroup on a Hilbert space:

\begin{lemma}
\label{pruss}
A $C_0$ semigroup $e^{t{\cal L}}$ on a Hilbert space ${\cal H}$ satisfies
$$||e^{t{\cal L}}||_{{\cal L}({\cal H})} \leq C \, e^{-\omega t},$$
for some constant $C >0$ and for $\omega>0$ if and only if
\be
\Re \lambda < 0, \  \forall \, \lambda \in \sigma (\cL), \label{1.8w} \ee
and \be \sup_{\Re \lambda \geq 0}  \|(\lambda I -{\cal L})^{-1}\|_{{\cal L}({\cal H})} <\infty. \label{1.9}
\ee
where $\sigma({\cal L})$ denotes the spectrum of the operator
${\cal L}$.
\end{lemma}
According to Corollary \ref{locspec} the spectrum of $\cA$ is fully included into $\Re \lambda <0$,
which clearly implies \rfb{1.8w}.
Then the proof of Theorem \ref{lr} is based on the following lemma
that shows that (\ref{1.9}) holds with $\mathcal{L}=\cA$.

\begin{lemma}\label{lemresolvent}
The resolvent operator of $\cA$ satisfies condition
\be \sup_{\Re \lambda \geq 0}  \|(\lambda I -{\cal A})^{-1}\|_{{\cal L}({\cal H})} <\infty. \label{1.9bis}
\ee
\end{lemma}

\begin{proof}
Suppose that condition \eqref{1.9bis} is false.
By the Banach-Steinhaus Theorem (see \cite{brezis}), there exists a sequence of complex numbers $\lambda_n$ such that $\Re \lambda_n \geq 0,\  |\lambda_n| \rightarrow +\infty$ and a sequence of vectors
$Z_n= (u_{n},v_{n},z_n )^t\in D(\cA)$ with
\be
\label{cont}
\|Z_n\|_{\cH} = 1
\ee
such that
\be
|| (\lambda_n I - \cA)Z_n||_{\cH} \rightarrow 0\;\;\;\; \mbox{as}\;\;\;n\rightarrow \infty,
\label{1.12} \ee
i.e.,
\be \lambda_n u_n - v_{n} \equiv f_{n}\rightarrow 0 \;\;\; \mbox{in}\;\; { D}(B^*),
\label{1.13}\ee
 \be
   \lambda_n
v_{n} + a \,B(B^* v_n +  z_{n} (1))  \equiv g_{n} \rightarrow 0 \;\;\;
\mbox{in}\;\; H,
\label{1.13b} \ee
\be \label{1.14}
 \lambda_n \, z_n + \tau^{-1} \partial_\rho z_n \equiv h_n \rightarrow 0 \; \; \; \mbox{in} \;\; L^2((0,1);H_1).
\ee

Our goal is to derive from \eqref{1.12} that $||Z_n||_{\cH}$ converges to zero, that furnishes a contradiction.

We notice that from \rfb{dissp} and \rfb{1.13} we have
\beqs
|| (\lambda_n I - {\cal A})Z_n||_{{\cal H}} \geq |\Re \left((
\lambda_n I - {\cal A})Z_n, Z_n\right)_{{\cal H}} |
\\
\geq
\Re \, \lambda_n - a_*^{-1} \left\|B^*u_{n} \right\|^2_{H_1} +
\left( \frac{\xi}{2\tau} - \frac{1}{a} \right) \, \left\|z_n(1)\right\|^2_{H_1} + \frac{a}{2} \, \left\|B^*v_{n}\right\|^2_{H_1} \\
=
\Re \, \lambda_n - a_*^{-1} \left\|\frac{B^*v_{n} + B^*f_{n}}{\lambda_n}\right\|^2_{H_1} +
\left( \frac{\xi}{2\tau} - \frac{1}{a} \right) \, \left\|z_n(1)\right\|^2_{H_1} + \frac{a}{2} \, \left\|B^*v_{n}\right\|^2_{H_1}.
\eeqs
Hence using the inequality
\[
\left\|B^*v_{n} + B^*f_{n} \right\|^2_{H_1} \leq 2 \|B^*v_{n}\|^2_{H_1} + 2\| B^*f_{n} \|^2_{H_1},
\]
 we obtain that
\beqs
|| (\lambda_n I - {\cal A})Z_n||_{{\cal H}} \geq
\Re \, \lambda_n-2 a_*^{-1} |\lambda_n|^{-2}\left\| B^*f_{n}\right\|^2_{H_1} +
\left( \frac{\xi}{2\tau} - \frac{1}{a} \right) \, \left\|z_n(1)\right\|^2_{H_1}
\\
 + (\frac{a}{2} -2 a_*^{-1} |\lambda_n|^{-2})
\, \left\|B^*v_{n}\right\|^2_{H_1}.
\eeqs
Hence for $n$ large enough, say $n\geq n^*$, we can suppose that
\[
\frac{a}{2} -2 a_*^{-1} |\lambda_n|^{-2}\geq \frac{a}{4}.
\]
and therefore for all $n\geq n^*$, we get
\beqs
|| (\lambda_n I - {\cal A})Z_n||_{{\cal H}} \geq
\Re \, \lambda_n-2 a_*^{-1} |\lambda_n|^{-2}\left\| B^*f_{n}\right\|^2_{H_1} +
\left( \frac{\xi}{2\tau} - \frac{1}{a} \right) \, \left\|z_n(1)\right\|^2_{H_1} \\+ \frac{a}{4}
\, \left\|B^*v_{n}\right\|^2_{H_1}.
\label{1.15}
\eeqs
By this estimate, \rfb{1.12} and \rfb{1.13}, we deduce that
\be
\label{cvk}
z_{n}(1) \rightarrow 0, \ B^*v_{n} \rightarrow 0,\  \hbox{ in } H_1,
\hbox{ as } n\to\infty ,
\ee
and in particular, from the coercivity (\ref{cors}),  that
$$
v_{n} \rightarrow 0, \hbox{ in } H, \hbox{ as } n\to\infty.
$$
This implies according to \rfb{1.13} that
\be
\label{c4}
u_n = \frac{1}{\lambda_n} v_n + \frac{1}{\lambda_n} \, f_n  \rightarrow 0, \;\;\;
\mbox{ in }\;\; { D}(B^*),  \hbox{ as } n\to\infty,
\ee
as well as
\be
\label{c5}
z_n(0) = B^*u_{n} \rightarrow 0, \;\;\;
\mbox{in}\;\; H_1,  \hbox{ as } n\to\infty.
\ee
By integration of the identity \rfb{1.14}, we have
\be
\label{c7}
z_n(\rho) = z_n (0) \, e^{-\tau \lambda_n \rho} + \tau \, \int_0^\rho e^{-\tau \lambda_n (\rho - \gamma)} \, h_n(\gamma) \, d \gamma.
\ee
Hence recalling that $\Re\lambda_n\geq0$
\[
\int_0^1 \|z_n(\rho) \|^2_{H_1}\,d\rho\leq
2 \|z_n (0)\|_{H_1}^2
+2\tau^2 \int_0^1 \int_0^\rho \|h_n(\gamma)  \|^2_{H_1} \, d \gamma \rho\,d\rho\to 0, \hbox{ as } n\to\infty.
\]
All together we have shown that $\|Z_n\|_{\cH}$ converges to zero, that clearly contradicts $\left\|Z_n\right\|_{\cH}=1$.
\end{proof}

The two hypotheses of Lemma \ref{pruss} are proved, then \rfb{estexp} holds. The proof of Theorem \ref{lr} is then finished.
\end{proof}

\section{Application to the stabilization of the wave equation with delay and a Kelvin--Voigt damping \label{sectionmulti}}
\setcounter{equation}{0}
We study the internal stabilization of a  delayed wave equation. More precisely, we consider the
system  given by :
\begin{align}
{}&u_{tt}(x,t) - a \, \Delta u_{t}(x,t)  - \Delta u(x,t - \tau) = 0 ,  && \mbox{\rm in} \quad \Omega \times (0, + \infty),\label{a1ex}\\
{}&u = 0,&& \mbox{\rm on}\quad \partial \Omega \times (0,+\infty), \label{a2ex}\\
{}&u(x,0) = u_0(x),  \quad  u_t(x,0)=u_1(x), && \mbox{\rm in} \quad \Omega,\label{a4ex}
\\
{}&\nabla u(x,t-\tau) = f_0(t-\tau),  \quad  && \mbox{\rm in} \quad \Omega \times (0,\tau) ,\label{a5ex}
\end{align}
where $\Omega$ is a smooth
open bounded domain of $\rline^n$ and $a, \tau>0$ are constants.

This problem enters in our abstract framework with
 $H = L^2(\Omega), B = -$ div $:  { D}(B) = H^1(\Omega)^n \rightarrow L^2(\Omega), B^* = \nabla : { D}(B^*) = H^1_0(\Omega) \rightarrow H_1:=L^2(\Omega)^n$,   the assumption \rfb{cors} being satisfied
owing to Poincar\'e's inequality.
The operator ${\cal A}$ is then given by
$$
{\cal A} \left (
\begin{array}{l}
u\\
v\\
z
\end{array}
\right ):=
\left (
\begin{array}{l}
v\\
a \Delta v + \hbox{ div } z(\cdot,1)\\
-\tau^{-1}z_{\rho}
\end{array}
\right ),
$$
\noindent with domain
\begin{equation}\label{domainex}
\begin{array}{l}
\displaystyle{
D ({\cal A}):=\Big \{\ (u,v,z)^\top\in
H^1_0(\Omega)
\times H^1_0(\Omega ) \times L^2(\Omega; H^1(0,1))\ : a \nabla v + z(\cdot, 1)\in H^1(\Omega),}\\
\hspace{2.5cm} \displaystyle{\nabla u=z(\cdot,0)\ \mbox{\rm in}\ \Omega
 \Big\},}
\end{array}
\end{equation}
in the Hilbert space
\begin{equation}\label{spaceex}
{\cal H}:= H^1_0 (\Omega)\times L^2 (\Omega)\times L^2(\Omega\times (0,1)).
\end{equation}

According to Lemma \ref{locspec} and Theorem \ref{lr} we have:
\begin{corollary}
If $\tau \le a$, the system \rfb{a1ex}--\rfb{a5ex} is exponentially stable in ${\cal H}$, namely
for $\xi > \frac{2\tau}{a}$, the energy
\[
E(t)=\frac12\left(
\int_\Omega(|\nabla u(x,t)|^2+|u_t(x,t)|^2)\,dx+
\xi \int_\Omega\int_0^1 |\nabla u(x,t-\tau\rho)|^2\,dx d\rho\right),
\]
satisfies
\[
E(t)\leq Me^{-\omega t} E(0), \ \forall\ t>0,
\]
for some positive constants $M$ and $\omega$.
\end{corollary}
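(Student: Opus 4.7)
The plan is to verify that the concrete problem \rfb{a1ex}--\rfb{a5ex} fits into the abstract framework developed in Sections 2--4, and then invoke Theorem \ref{lr} directly. Thus I will work with the choices already indicated in the excerpt: $H = L^2(\Omega)$, $H_1 = L^2(\Omega)^n$, $B = -\operatorname{div}$ with an appropriate domain, and $B^* = \nabla$ with $D(B^*) = H^1_0(\Omega)$. A standard integration by parts using the Dirichlet condition confirms that $B^*$ really is the adjoint of $B$, so that $BB^*$ coincides with the Dirichlet Laplacian $-\Delta_D$, and the abstract equation \rfb{a1} specialises exactly to \rfb{a1ex}.

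Next I would check the two structural assumptions imposed in Section 1 on $B^*$. The coercivity \rfb{cors} is nothing but Poincar\'e's inequality, which yields $\|\nabla v\|_{L^2(\Omega)^n} \geq C\|v\|_{L^2(\Omega)}$ for every $v \in H^1_0(\Omega)$. The compact embedding of $V = D(B^*) = H^1_0(\Omega)$ into $H = L^2(\Omega)$, used in the spectral analysis of Section 3, is provided by the Rellich--Kondrachov theorem (since $\Omega$ is bounded). With these verifications the operator $\cA$ with domain \rfb{domainex} in the space \rfb{spaceex} is exactly the abstract operator of Section 2, and Proposition \ref{propexistunic} gives well--posedness.

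Now I would translate the abstract exponential decay into the statement about $E$. Writing $z(x,\rho,t) = \nabla u(x, t-\tau\rho)$ as in \rfb{defz0}, the $\cH$-norm of $(u(t), u_t(t), z(t))$ satisfies
\[
\|(u(t), u_t(t), z(t))\|_\cH^2
= \int_\Omega |\nabla u(x,t)|^2 \, dx + \int_\Omega |u_t(x,t)|^2 \, dx
+ \xi \int_\Omega \int_0^1 |\nabla u(x, t-\tau\rho)|^2 \, d\rho \, dx,
\]
which is exactly $2E(t)$. Since $\tau \leq a$ and $\xi > 2\tau/a$, Theorem \ref{lr} yields $\|e^{t\cA}\|_{\cL(\cH)} \leq C e^{-\omega t}$ for some $C, \omega > 0$, and applied to the initial datum $(u_0, u_1, f_0(-\tau\,\cdot))$ this immediately gives $E(t) \leq M e^{-\omega t} E(0)$ with $M = C^2$.

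The argument is essentially a bookkeeping exercise; there is no genuine obstacle. The one point that requires a little care is the precise identification of $D(B)$ so that $B^* = \nabla$ is indeed the adjoint: taking $D(B) = H^1(\Omega)^n$ (as in the statement) or, more naturally, $H(\operatorname{div}; \Omega)$, one obtains the same $BB^*$, and the condition $aB^*v + z(\cdot,1) \in D(B)$ appearing in $D(\cA)$ translates correctly into the requirement $a\nabla v + z(\cdot,1) \in H^1(\Omega)$ (or $H(\operatorname{div};\Omega)$) visible in \rfb{domainex}.
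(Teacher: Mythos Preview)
Your proposal is correct and follows exactly the route the paper takes: the paper simply writes ``According to Lemma~\ref{locspec} and Theorem~\ref{lr} we have'' and states the corollary, relying on the identifications $H=L^2(\Omega)$, $H_1=L^2(\Omega)^n$, $B=-\operatorname{div}$, $B^*=\nabla$ and on Poincar\'e's inequality for \rfb{cors}, just as you spell out. One cosmetic slip: squaring $\|e^{t\cA}\|_{\cL(\cH)}\le Ce^{-\omega t}$ gives $E(t)\le C^2 e^{-2\omega t}E(0)$, so the decay rate in the energy estimate is $2\omega$ rather than $\omega$; this is harmless since the corollary only asserts existence of \emph{some} positive constants.
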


\section*{Conclusion}
By a careful spectral analysis combined with a frequency domain approach, we have shown that the system \rfb{a1}--\rfb{a5} is exponentially stable if $\tau \leq a$
and that this condition is optimal.
But from the general form of \rfb{a1}, we can only consider interior Kelvin-Voigt dampings.
Hence an interesting perspective is to consider the wave  equation with dynamical Ventcel
boundary conditions with a delayed term and a Kelvin-Voigt damping.

%and the same thing where the operator $B$ acting on the boundary (i.e., unbounded). These questions will be treated in the future.

\end{document}